\renewcommand\section{\@startsection{section}{1}{\z@}
                                   {-3.5ex \@plus -1ex \@minus -.2ex}
                                   {2.3ex \@plus .2ex}
                                   {\normalfont\large\bfseries}}
\renewcommand\subsection{\@startsection{subsection}{2}{\z@}
                                   {-3.25ex\@plus -1ex \@minus -.2ex}
                                   {1.5ex \@plus .2ex}
                                   {\normalfont\normalsize\bfseries}}
\renewcommand\subsubsection{\@startsection{subsubsection}{3}{\z@}
                                   {-3.25ex\@plus -1ex \@minus -.2ex}
                                   {1.5ex \@plus .2ex}
                                   {\normalfont\normalsize\bfseries}}
\renewcommand\paragraph{\@startsection{paragraph}{4}{\z@}
                                   {3.25ex \@plus1ex \@minus.2ex}
                                   {-1em}
                                   {\normalfont\normalsize\bfseries}}
\newdimen\tableauside\tableauside=1.0ex
\newdimen\tableaurule\tableaurule=0.4pt
\newdimen\tableaustep
\def\phantomhrule#1{\hbox{\vbox to0pt{\hrule height\tableaurule
width#1\vss}}}
\def\phantomvrule#1{\vbox{\hbox to0pt{\vrule width\tableaurule
height#1\hss}}}
\def\sqr{\vbox{%
  \phantomhrule\tableaustep

\hbox{\phantomvrule\tableaustep\kern\tableaustep\phantomvrule\tableaustep}%
  \hbox{\vbox{\phantomhrule\tableauside}\kern-\tableaurule}}}
\def\squares#1{\hbox{\count0=#1\noindent\loop\sqr
  \advance\count0 by-1 \ifnum\count0>0\repeat}}
\def\tableau#1{\vcenter{\offinterlineskip
  \tableaustep=\tableauside\advance\tableaustep by-\tableaurule
  \kern\normallineskip\hbox
    {\kern\normallineskip\vbox
      {\gettableau#1 0 }%
     \kern\normallineskip\kern\tableaurule}%
  \kern\normallineskip\kern\tableaurule}}
\def\gettableau#1 {\ifnum#1=0\let\next=\null\else
  \squares{#1}\let\next=\gettableau\fi\next}
\renewcommand\section{\@startsection{section}{1}{\z@}
                                   {-3.5ex \@plus -1ex \@minus -.2ex}
                                   {2.3ex \@plus .2ex}
                                   {\normalfont\large\bfseries}}
\renewcommand\subsection{\@startsection{subsection}{2}{\z@}
                                   {-3.25ex\@plus -1ex \@minus -.2ex}
                                   {1.5ex \@plus .2ex}
                                   {\normalfont\normalsize\bfseries}}
\renewcommand\subsubsection{\@startsection{subsubsection}{3}{\z@}
                                   {-3.25ex\@plus -1ex \@minus -.2ex}
                                   {1.5ex \@plus .2ex}
                                   {\normalfont\normalsize\bfseries}}
\renewcommand\paragraph{\@startsection{paragraph}{4}{\z@}
                                   {3.25ex \@plus1ex \@minus.2ex}
                                   {-1em}
                                   {\normalfont\normalsize\bfseries}}
\newcommand{\be}{\begin{equation}}
\newcommand{\ee}{\end{equation}}
\newcommand{\bea}{\begin{eqnarray}}
\newcommand{\eea}{\end{eqnarray}}
\newcommand{\id}{\hbox{1\kern-.27em l}}
\newcommand{\al}{\alpha}
\newcommand{\bet}{\beta}
\newcommand{\la}{\lambda}
\newcommand{\cN}{\mathcal{N}}
\newcommand{\non}{\nonumber}
\newcommand{\SO}{\mathrm{SO}}
\newcommand{\Sp}{\mathrm{Sp}}
\newcommand{\Spin}{\mathrm{Spin}}
\newtheorem{lem}{Lemma}[section]
\newtheorem{prop}{Proposition}[section]
\newtheorem{remark}{Remark}[section]
\newcommand{\ba}{\begin{array}}
\newcommand{\ea}{\end{array}}
\newtheorem{Def}{Definition}
\begin{document}

\title[Invariants of partitions and    representative elements]
{Invariants of partitions and  representative elements }

%\date{October 16, 2015}
\author[Bao Shou]{Bao Shou$^{\dag}$}
\author[Qiao Wu]{Qiao Wu$^*$}

\address{$^*$ College of Logistics and E-commerce \\Zhejiang Wanli University\\
No.8 South Qianhu Road, Ningbo 315100, P.R.China}
\address{$\dag$ Center  of Mathematical  Sciences\\Zhejiang University \\
Hangzhou 310027,  China}

\email{$^*$10920005@zju.edu.cn, \quad   $^{\dag}$ bsoul@zju.edu.cn }

\subjclass[2010]{05E10}

\keywords{Partition, fingerprint, Kazhdan-Lusztig map, construction, rigid semisimple operator}

\date{}

\maketitle

\begin{abstract}
The symbol invariant is used to describe the Springer correspondence for the classical  groups by Lusztig. And the fingerprint invariant can be  used to describe the Kazhdan-Lusztig map.   They are   invariants of rigid semisimple operators described by pairs of partitions $(\lambda^{'}, \lambda^{''})$.
We construct a nice representative element of the rigid semisimple operators with the same symbol invariant. The fingerprint of the representative element  can be obtained immediately.  We also discuss the  representative element  of rigid semisimple operator with the same fingerprint invariant. Our construction can be regarded as the maps between these two invariants.

%,  and then  prove the other elements can be connected to it by symbol preserving maps.
%We make a classification of the maps preserving symbol, and then  prove these maps  preserve fingerprint. We find  a natural representative element of $\lambda=\lambda^{'}+\lambda^{''}$ with the same fingerprint. The other elements are connected to it by satisfying different  conditions in the definition of fingerprint and we find they have the same symbol. The constructions of the symbol and the fingerprint in previous works play a significant role  in the proof of the conjecture.
\end{abstract}

\tableofcontents

\section{Introduction}
A partition $\la$ of the positive integer $n$ is a decomposition $\sum_{i=1}^l \la_i = n$  ($\la_1\ge \la_2 \ge \cdots \ge \la_l$), with the length  $l$. There is an   one to one correspondence between the partitions and  Young tableaux.  Young tableaux occur in a number of branches of mathematics and physics. They  are also  tools for the construction of  the eigenstates of Hamiltonian System \cite{Sh11} \cite{{Sh14}} \cite{{Sh15}} and  label  surface operators\cite{GW08}.

Surface operators are two-dimensional defects, which are  natural generalisations of the 't~Hooft operators.  In \cite{GW06},  Gukov and Witten initiated a study of surface operators in $\mathcal{N}=4$ super Yang-Mills theories.   The $S$-duality  \cite{Montonen:1977} assert that
$$S : \; (G, \tau) \rightarrow  ( G^{L}, - 1 / n_{\mathfrak{g}} \tau),$$
where $n_{\mathfrak{g}}$ is 2 for $F_4$,  3 for $G_2$, and 1 for other semisimple classical groups \cite{GW06};  $\tau $ is usual gauge coupling constant and $G^{L}$ is  Langlands dual group of $G$.
In \cite{GW08},  Gukov and Witten extended their earlier analysis  and identified a subclass of surface operators called {\it 'rigid'} surface operators expected to be closed under $S$-duality.
There are two types rigid surface operators:  unipotent and semisimple. The rigid semisimple surface operators  are labelled by pairs of  partitions $(\lambda^{'}, \lambda^{''})$. Unipotent rigid surface operators arise  when one of the  partitions is empty.

In \cite{Wy09}, Wyllard made some explicit proposals for how the $S$-duality maps should act on rigid surface operators  based on the  invariants of partitions.
The invariant {\it fingerprint} based on  Kazhdan-Lusztig map for the classical groups \cite{Lusztig:1984}\cite{Symbol 2}.
 The Kazhdan-Lusztig map  is a map from the unipotent conjugacy classes to the set of conjugacy classes of the Weyl group. It can be extended to the case of rigid semisimple conjugacy classes\cite{Spaltenstein:1992}. The rigid semisimple conjugacy classes and  the conjugacy classes of the Weyl group are described by pairs of partitions $(\lambda^{'};\lambda^{''})$   and pairs of partitions  $[\alpha;\beta]$, respectively. The fingerprint invariant  is  a map between these two classes of objects. There is another invariant  {\it symbol } related to the the Springer correspondence\cite{CM93}.

Compared to the fingerprint, the symbol invariant is much easier to be calculated and  more    convenient   to  find the $S$-duality maps of surface operators.
In \cite{Shou 1:2016}, we find a new subclass of rigid surface operators related by S-duality.  In \cite{Shou-sc}, we found a construction  of  symbol for the rigid partitions in the  $B_n, C_n$, and $D_n$ theories.  Another construction of symbol is  given in \cite{SW17}. We discuss the basic properties of  fingerprint and its constructions in  \cite{SW17-2}.

A problematic mismatch in the total number of rigid surface operators between  the $B_n$ and the $C_n$ theories was pointed out in \cite{GW08} \cite{Wy09}.      In \cite{rso},  we clear up cause of  this discrepancy and  construct all the $B_n/C_n$ rigid surface operators which can not have a dual. The fingerprint invariant is   assumed to be  equivalent to the symbol invariant \cite{Wy09}.
In \cite{iv}, we   prove the symbol invariant of partitions implies the fingerprint invariant of partitions. The constructions of the symbol invariant  and the fingerprint invariant in previous works play a significant role  in the proof.
This problem suggest us to find map between  two invariants.

The following is an outline of  this article.  In Section \ref{rigidr}, we summary   some basic results related to the rigid partition in \cite{Wy09}.
In Section \ref{symbol-s},  we introduce  the definition and the construction of symbol in \cite{Shou-sc}. Then we present the construction of the representative element $(\lambda^{'}, \lambda^{''})_R$ of rigid surface operators with the   given symbol.
In Section \ref{finger},  we give the constructions of  the fingerprint invariant in \cite{SW17-2}. We calculate the fingerprint of  the representative element $(\lambda^{'}, \lambda^{''})_R$.  We also discuss  the construction of the representative element $\mu_r$  of $\lambda^{'}+ \lambda^{''}$ with the    fingerprint given. our construction can be regarded as the maps between  the fingerprint  invariant and symbol invariant.
In Section \ref{cd},  we discuss the realization of the above results for the $C_n$ and $D_n$ theories.

%The other elements are connected to it by satisfying different  conditions in the definition of fingerprint.  Unfortunately, we have not be able to classify these maps.
We will focus on theories with gauge groups $\SO(2n)$  which are Langlands self-duality and the gauge groups $\Sp(2n) $ whose Langlands dual group is $\SO(2n+1)$.

\section{Rigid Partitions in the  $B_n, C_n$, and $D_n$ theories}\label{rigidr}
In this section, we introduce the rigid partitions in the $B_n$, $C_n$, and $D_n$ theories.
%For two partitions  $\la$ and $\ka$,   $\la+\ka$ is the partition with parts $\la_i+\ka_i$.
For the $B_n$($D_n$)theories, unipotent conjugacy  classes  are in one-to-one correspondence with partitions of $2n{+}1$($2n$) where all even integers appear an even number of times.  For the $C_n$ theories, unipotent conjugacy  classes  are in one-to-one correspondence with partitions $2n$ for which all odd integers appear an even number of times.  If it has no gaps (i.e.~$\la_i-\la_{i+1}\leq1$ for all $i$) and no odd (even) integer appears exactly twice, a partition in the $B_n$ or $D_n$ ($C_n$) theories is called {\it rigid}. We will focus on rigid partition in this paper, which correspond to   rigid operators. The operators  will refer to rigid  operators in the rest of paper. The addition rule of two partitions $\lambda$ and $\mu$ is defined by the additions of each part $\lambda_i+\mu_i$.

The following facts  play important roles in this study.
\begin{prop}{\label{Pb}}
The longest row in a rigid  $B_n$ partition always contains an odd number of boxes. The following two rows of the first row are either both of odd length or both of even length.  This pairwise pattern then continues. If the Young tableau has an even number of rows the row of shortest length has to be even.
\end{prop}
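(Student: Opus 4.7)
The plan is to work in the multiplicity description $\lambda = (v_1^{m_1}, v_2^{m_2}, \ldots, v_s^{m_s})$, where $v_1 > v_2 > \cdots > v_s \ge 1$ are the distinct parts and $m_i \ge 1$ their multiplicities. The ``no gaps'' hypothesis forces $v_i - v_{i+1} = 1$, so the parities of consecutive $v_i$ strictly alternate. Let $M_k := m_1 + \cdots + m_k$ be the row index at the bottom of the $k$-th block of equal parts; the parity of row $j$ then equals that of $v_i$ for the unique $i$ with $M_{i-1} < j \le M_i$. Translated into this language, the proposition becomes the combinatorial assertion that $v_1$ is odd and that every $M_k$ with $k < s$ is an odd integer. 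A preliminary observation worth recording is that the total row count $l = \sum_i m_i$ is automatically odd for rigid $B_n$ partitions: by rigidity the even-valued blocks contribute even summands to $l$, while the parity of $|\lambda| = 2n+1$ forces the remaining odd-valued contribution to $l$ to be odd. In particular the ``$l$ even'' clause of the proposition is vacuously true.

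To prove $v_1$ is odd I would argue by contradiction: if $v_1$ were even, rigidity would give $m_1 \ge 2$ with $m_1$ even; combining this with the alternating parities of the $v_i$ and with the rule $m_i \ne 2$ whenever $v_i$ is odd, the forced multiplicity pattern of the blocks yields a contradiction with $|\lambda|\equiv 1\pmod 2$. For the claim that each $M_k$ with $k<s$ is odd I would proceed by induction on $k$, showing that once $M_{k-1}$ is known odd, $m_k$ must be even. When $v_k$ is even this is immediate from rigidity; the delicate case is $v_k$ odd, where I would re-run the parity-of-the-total argument on the bottom tail $\lambda_{M_{k-1}+1},\ldots,\lambda_l$ — whose total equals $2n+1 - \sum_{i<k} m_i v_i$ — and combine the resulting parity constraint with the rigidity condition $m_k \ne 2$ to force $m_k \in \{4,6,8,\ldots\}$, which is even, so $M_k$ stays odd.

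The main obstacle will be this inductive step for interior blocks with $v_k$ odd: rigidity alone merely forbids the single value $m_k = 2$, so the other odd multiplicities $1, 3, 5, \ldots$ must be excluded by the tail sum-parity argument. Once the induction is complete, the pairwise parity pattern for rows $(2,3), (4,5), \ldots$ follows immediately, because row $2k$ and row $2k+1$ agree in parity precisely when no $M_j$ equals the even integer $2k$, i.e., when every $M_j$ is odd.
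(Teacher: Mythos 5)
The paper states this proposition without proof (it is imported from Wyllard's work), so I am judging your argument on its own terms, and there is a genuine problem: you have read ``the rows of the Young tableau'' as the parts $\lambda_1\ge\lambda_2\ge\cdots$ themselves, and under that reading the proposition is false, so no argument along your lines can close. The partition $[2,2,1,1,1]$ of $7$ is rigid in $B_3$ by the paper's own definition (the even part $2$ occurs twice, there are no gaps, and the odd part $1$ occurs three times --- this is the minimal nilpotent orbit of $\mathfrak{so}(7)$), yet its longest part is $2$, which is even; likewise $[5,4,4,3,2,2,1]$ is rigid in $B_{10}$ and its fourth and fifth parts are $3$ and $2$, of opposite parity. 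These examples pinpoint exactly where your two key steps fail: the configuration $v_1=2$, $m_1=2$, $v_2=1$, $m_2=3$ satisfies every constraint you list and sums to $7\equiv 1\pmod 2$, so the contradiction you hope to extract when $v_1$ is even does not materialize; and in the inductive step the tail-sum parity argument only constrains the parity of the total $\sum_{i\ge k,\ v_i\ \mathrm{odd}} m_i$, not of the single multiplicity $m_k$, so it cannot exclude $m_k=1$ (which occurs for the block $v_3=3$ in the second example, making $M_3=4$ even).

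The statement is true --- and consistent with Propositions \ref{Pd} and \ref{Pc} --- only if the parts $\lambda_i$ are taken as the \emph{columns} of the tableau, so that the rows are the parts of the transpose $\lambda^{T}$. With that reading the proof is short and does not even need rigidity: writing $\mathrm{mult}(j)$ for the multiplicity of $j$ in $\lambda$, one has $\lambda^{T}_{j}-\lambda^{T}_{j+1}=\mathrm{mult}(j)$, so rows $2k$ and $2k+1$ differ by $\mathrm{mult}(2k)$, which is even by the defining condition on $B_n$ partitions; if the number of rows $\lambda_1$ is even, the shortest row equals $\mathrm{mult}(\lambda_1)$, again even; and the longest row $\lambda^{T}_{1}=l$ is odd because $2n+1=\sum_j j\cdot\mathrm{mult}(j)\equiv\sum_{j\ \mathrm{odd}}\mathrm{mult}(j)\pmod 2$ forces the number of odd parts to be odd while the number of even parts is even. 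Your preliminary observation that $l$ is always odd is precisely this last computation, so you already hold the main ingredient --- it just proves the first clause of the proposition rather than rendering the final clause vacuous.
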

\begin{flushleft}
\textbf{Remark:} If the last row of the partition is odd, the number of rows is odd.
\end{flushleft}

\begin{prop}{\label{Pd}}
For a rigid $D_n$ partition, the longest row  always contains an even number of boxes. And the following two rows are either both of even length or both of odd length. This pairwise pattern then continue. If the Young tableau has an even number of rows the row of the shortest length has to be even.
\end{prop}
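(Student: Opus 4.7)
The plan is to derive all three assertions—$\lambda_1$ even, the pair-by-pair parity matching of rows $(2,3),(4,5),\ldots$, and the condition that $\lambda_l$ is even when $l$ is even—by a careful bookkeeping of the multiplicities of the distinct values of $\lambda$. The argument parallels that for Proposition \ref{Pb} in \cite{Wy09}.

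Setup. Group the rows into blocks of constant value. The ``no gaps'' condition forces the distinct values of $\lambda$ to be $v_1 > v_2 > \cdots > v_k$ with $v_{j+1} = v_j - 1$, so the $v_j$ strictly alternate in parity. Writing $m_j$ for the multiplicity of $v_j$, the rigidity conditions read: $m_j$ is even whenever $v_j$ is even, and $m_j \ne 2$ whenever $v_j$ is odd. The $D_n$ sum constraint gives $\sum_j m_j v_j = 2n$.

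Step 1 (longest row is even). Assume, for contradiction, that $v_1$ is odd. Then the parities of the $v_j$'s give
\[
  \sum_j m_j v_j \;\equiv\; \sum_{j : v_j \text{ odd}} m_j \pmod{2},
\]
and this must be $\equiv 0 \pmod 2$. Combining this congruence with the admissible values of $m_j$ (even for $v_j$ even, and $\ne 2$ for $v_j$ odd) produces a contradiction, which I would extract by a short case analysis on $k$.

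Step 2 (pairwise pattern after row 1). With $v_1$ even, the block of value $v_1$ fills rows $1, \ldots, m_1$ and $m_1$ is even. The pairing $(2,3),(4,5),\ldots$ is equivalent to requiring that each partial sum $m_1,\, m_1 + m_2,\, m_1 + m_2 + m_3, \ldots$ be odd, so that a value-drop (and hence a parity flip of $\lambda_i$) never falls strictly between two rows of the same pair. I would verify this by induction on $k$: the rigidity constraints on the pair $(v_j \bmod 2,\, m_j \bmod 2)$ force exactly the right parities of these partial sums, with the clause $m_j \ne 2$ for odd $v_j$ used precisely at the step where a length-$2$ block of an odd value would otherwise straddle a pair boundary. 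Step 3 then follows immediately, since $l = \sum_j m_j$ has parity controlled by Step 2, and forcing $l$ to be even forces $v_k$ itself to be even by a final parity tally.

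The main obstacle I would expect is Step 2: ruling out \emph{every} potential parity flip at an even-indexed row, uniformly across all admissible shapes of $\lambda$. This requires splitting on whether $m_j \in \{1\}$ or $m_j \geq 3$ in the odd-value case, and $m_j = 2$ or $m_j \geq 4$ in the even-value case, with the exclusion $m_j \ne 2$ for odd $v_j$ playing the decisive role exactly at pair boundaries. Once the partial-sum parities have been established block by block, both the interior pairing and the parity of the final $v_k$ (Step 3) fall out of the same count.
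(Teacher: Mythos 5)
The paper itself offers no proof of Proposition \ref{Pd}: it is quoted from \cite{Wy09} as a known fact, so your argument has to be judged on its own terms. There is a genuine gap, and it occurs at the very first line of your setup: you read ``the longest row of the Young tableau'' as the largest part $\lambda_1$, and under that reading the claim you set out to establish in Step 1 is false. The partition $[3,2^2,1]$ is a rigid $D_4$ partition --- it is one of the three rigid $\mathfrak{so}(8)$ orbits listed in \cite{GW08}: it has no gaps, its only even part $2$ has even multiplicity, and no odd part has multiplicity exactly two --- yet $\lambda_1=3$ is odd (and its number of parts is $4$, even, with smallest part $1$, odd, so the last clause fails too). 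Likewise $[2^2,1]$ is a rigid $B_2$ partition with $\lambda_1=2$ even, which would contradict Proposition \ref{Pb} under the same reading. The convention in this literature is that the Young tableau of $\lambda$ has \emph{columns} of lengths $\lambda_i$, so the ``rows'' in Propositions \ref{Pb}--\ref{Pc} are the parts of the transpose $\lambda^t$. Because your target statement is false as you have interpreted it, the ``short case analysis on $k$'' deferred in Step 1 and the induction deferred in Step 2 cannot be completed; the proof cannot be repaired without changing the object it is about.

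Once the statement is read correctly it is essentially immediate, and far less delicate than your Step 2 anticipates. Write $m_j$ for the multiplicity of the value $j$ in $\lambda$; the no-gap condition gives $m_j\ge 1$ for $1\le j\le\lambda_1$, the tableau has $\lambda_1$ rows, and $\lambda^t_j-\lambda^t_{j+1}=m_j$. The first row has length $\lambda^t_1=\sum_j m_j$, which is even because $\sum_j j\,m_j=2n$ forces $\sum_{j\ \mathrm{odd}}m_j$ to be even, while each $m_j$ with $j$ even is even by the $D_n$ multiplicity condition. Rows $2i$ and $2i+1$ differ in length by $m_{2i}$, which is even, giving the pairwise pattern; and if the number of rows $\lambda_1$ is even, the shortest row has length $m_{\lambda_1}$, again even. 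Note in particular that the clause ``no odd part has multiplicity exactly $2$,'' which your sketch singles out as playing the decisive role at pair boundaries, is not needed anywhere in this parity count; it belongs to the characterization of rigidity, not to the proof of the parity pattern.
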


\begin{prop}{\label{Pc}}
For a rigid $C_n$ partition, the longest two rows  both contain either  an even or an odd number  number of boxes.  This pairwise pattern then continues. If the Young tableau has an odd number of rows the row of shortest length has contain an even number of boxes.
\end{prop}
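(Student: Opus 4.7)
The plan is to analyse the block decomposition of a rigid $C_n$ partition, combining the no-gaps hypothesis, the $C_n$-admissibility (odd parts appear with even multiplicity), and the rigidity condition (no even integer appears exactly twice). First I would write $\lambda$ in run-length form $\lambda = v_1^{m_1} v_2^{m_2} \cdots v_k^{m_k}$ with $v_1 > v_2 > \cdots > v_k \geq 1$. The no-gaps hypothesis forces consecutive distinct block values to differ by exactly one, so the parities of $v_1, v_2, \ldots$ strictly alternate. Admissibility then forces $m_i$ to be even whenever $v_i$ is odd, while rigidity forces $m_i \in \{1,3,4,5,6,\ldots\}$ whenever $v_i$ is even.

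Next I would pair the rows in order $(1,2), (3,4), \ldots$ and verify pair-by-pair that the two rows share parity. Rows inside a single block $v_i^{m_i}$ trivially share the value $v_i$, so the only delicate case is a pair that straddles the transition between blocks $v_i$ and $v_{i+1}$, whose parities differ. I would proceed by induction on $i$, tracking the parity of the cumulative row count $s_i = m_1 + \cdots + m_i$ and arguing that, at each transition of interest, the constraints on the $m_j$ force $s_i$ to be even, so the pairing cleanly exhausts each block before the next begins.

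For the final clause, I would observe that since odd-valued blocks automatically carry even multiplicity, the total row count $l = \sum m_i$ has the same parity as the sum of multiplicities contributed by the even-valued blocks; when $l$ is odd, an unpaired final row must therefore lie in an even-valued block and so has even length.

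The main obstacle, I expect, will be the careful bookkeeping at those block boundaries where an even-valued block has odd multiplicity $m_i \in \{1,3,5,\ldots\}$: these configurations create a single leftover row at the end of the block, and one has to verify that each such leftover finds a parity-compatible partner in the adjacent block using Proposition \ref{Pb}-style parity discipline imposed by rigidity. As a cross-check I would also consider reducing the $C_n$ statement to the $B_n$ or $D_n$ propositions by transposing partitions --- since conjugation interchanges the even-multiplicity conditions on odd and even parts --- but turning this into a genuine reduction requires showing that the appropriate transposition respects the rigidity condition, which I would treat as auxiliary rather than the main line of argument.
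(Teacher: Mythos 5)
The paper never proves Proposition \ref{Pc}; it is quoted from [Wy09] as a known fact, so there is no in-paper argument to compare yours to, and your proposal has to stand on its own. It does not, because the statement you are actually trying to prove --- that the \emph{parts} $\lambda_1\geq\lambda_2\geq\cdots$ pair up as $(\lambda_1,\lambda_2),(\lambda_3,\lambda_4),\dots$ with equal parities --- is false under the paper's own definitions. Take $\lambda=(2,1,1)$ in $C_2$ (the minimal orbit of $\mathfrak{sp}(4)$), or more generally $(2,1^{2n-2})$: the odd part $1$ occurs an even number of times, there are no gaps, and the even part $2$ does not occur exactly twice, so this is a rigid $C_n$ partition; yet its two longest parts are $2$ and $1$, and it has an odd number of parts with shortest part $1$. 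This kills precisely the step you identify as the ``main obstacle'': your induction needs each cumulative multiplicity $s_i$ to be even at every block boundary straddled by a pair, but for an even value $v_i$ rigidity only excludes $m_i=2$, and $m_i=1$ already breaks the pairing. That obstacle is not bookkeeping to be overcome --- it is where the counterexamples live. Your argument for the final clause has the same problem: since the no-gap condition forces the smallest part to be $1$, the ``unpaired final row'' in your reading always lies in an odd-valued block.

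What is true --- and what Proposition \ref{Pc} must mean, since otherwise Propositions \ref{Pb} and \ref{Pd} also fail on $(2,2,1)$ and $(2,2,1^{2n-4})$, and on the paper's own example $2^2 1^9$ for $B_6$ --- is the statement about the \emph{columns} of the diagram, i.e.\ about $\lambda^t$. There the proof is immediate and needs neither rigidity nor the no-gap condition: $\lambda^t_j-\lambda^t_{j+1}$ equals the multiplicity of the part $j$ in $\lambda$, which is even for every odd $j$ by $C_n$-admissibility, so $\lambda^t_{2i-1}\equiv\lambda^t_{2i}\pmod 2$ for all $i$; and the number of rows of the transposed tableau is $\lambda_1$, so when it is odd the shortest row $\lambda^t_{\lambda_1}$, being the multiplicity of the odd integer $\lambda_1$, is even. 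Your closing ``cross-check'' via transposition is pointing at exactly this, but it is miscast as a reduction of $C_n$ to $B_n$/$D_n$ (conjugation turns the multiplicity condition on $\lambda$ into a consecutive-difference condition on $\lambda^t$, not into $B_n$- or $D_n$-admissibility); the transpose is not a tool for the proof, it is the correct reading of the claim. I recommend you restate the proposition in terms of $\lambda^t$ (or of multiplicities of odd parts) and replace the block-boundary induction by the one-line difference computation above.
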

\begin{flushleft}
Examples of partitions in the $B_n$,  $D_n$, and $C_n$ theories:
\end{flushleft}
\begin{figure}[!ht]
  \begin{center}
    \includegraphics[width=4.8in]{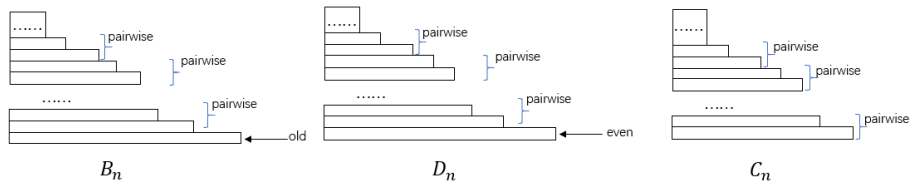}
  \end{center}
  \caption{ Partitions in the $B_n$,  $D_n$, and $C_n$  theories. }
  \label{bdc}
\end{figure}

\section{Symbol invariant of partitions}\label{symbol-s}
 In this section, we introduce the definition of symbol and  its construction \cite{Shou-sc}.
\subsection{Symbol invariant and the construction }
In \cite{Shou-sc}, we  proposed equivalent definitions of symbol for the  $C_n$  and $D_n$ theories which  are  consistent with that for the $B_n$ theory  as   much as possible.
\begin{Def}\label{D2}
 The  symbol  of a partition in the  $B_n $, $ C_n$,  and $D_n$ theories.
\begin{itemize}
  \item For the $B_n$ theory: first we add $l-k$  to the $k$th part of the partition.
Next we arrange the odd parts of the sequence $l-k+\lambda_k$ and the even  parts  in an increasing sequence $2f_i+1$ and in an increasing sequence $2g_i$, respectively.
Then we calculate the terms
 \begin{equation*}
   \al_i = f_i-i+1\quad\quad\quad  \bet_i = g_i-i+1.
 \end{equation*}
 Finally we  write the {\it symbol} as
\begin{equation*}
  \left(\ba{@{}c@{}c@{}c@{}c@{}c@{}c@{}c@{}} \al_1 &&\al_2&&\al_3&& \cdots \\ &\bet_1 && \bet_2 && \cdots  & \ea \right).
\end{equation*}

  \item For the $C_n$ theory: \begin{description}
                                \item[1]If the length of partition is even,  compute the symbol as in the $B_n$ case, and then append an extra 0 on the left of the top row of the symbol.
                                \item[2]  If the length of the partition is odd, first append an extra 0 as the last part of the partition. Then compute the symbol as in the $B_n$ case. Finally,  we delete a 0 in the first entry of the bottom row of the symbol.
                              \end{description}
   \item For the $D_n$ theory: first append an extra 0 as the last part of the partition, and then compute the symbol as in the $B_n$ case. We  delete  two 0's  in the first two entries of the bottom row of the symbol.
\end{itemize}
\end{Def}

\begin{table}
\begin{tabular}{|c|c|c|c|}\hline
  % after \\: \hline or \cline{col1-col2} \cline{col3-col4} ...
Parity of  row & Parity of $i+t+1$ & Contribution to symbol  & $L$  \\ \hline
odd & even  & $\Bigg(\!\!\!\ba{c}0 \;\; 0\cdots \overbrace{ 1\;\; 1\cdots1}^{L} \\
\;\;\;0\cdots 0\;\; 0\cdots 0 \ \ea \Bigg)$ & $\frac{1}{2}(\sum^{m}_{k=i}n_k+1)$  \\ \hline
even & odd  & $\Bigg(\!\!\!\ba{c}0 \;\; 0\cdots \overbrace{ 1\;\; 1\cdots1}^{L} \\
\;\;\;0\cdots 0\;\; 0\cdots 0 \ \ea \Bigg)$   & $\frac{1}{2}(\sum^{m}_{k=i}n_k)$  \\ \hline
even & even  &  $\Bigg(\!\!\!\ba{c}0 \;\; 0\cdots 0\;\; 0 \cdots 0 \\
\;\;\;0\cdots \underbrace{1 \;\;1\cdots 1}_{L} \ \ea \Bigg)$   & $\frac{1}{2}(\sum^{m}_{k=i}n_k)$  \\ \hline
odd & odd  &  $\Bigg(\!\!\!\ba{c}0 \;\; 0\cdots 0\;\; 0 \cdots 0 \\
\;\;\;0\cdots \underbrace{1\; \;1\cdots 1}_{L} \ \ea \Bigg)$     & $\frac{1}{2}(\sum^{m}_{k=i}n_k-1)$  \\ \hline
\end{tabular}
\caption{ Contribution to  symbol of the $i$\,th row$(B_n(t=-1)$, $C_n(t=0)$, and $D_n(t=1)$). }
\label{tsy}
\end{table}
In \cite{Shou-sc}, we give the construction of symbol by  Table \ref{tsy}.  We determine  the contribution to symbol for each row of a partition in the different  theories  uniformly.
For rigid semisimple  operators $(\lambda',\lambda'')$, one can construct the symbol of them  by calculating the symbols for both $\lambda'$ and $\lambda''$,  then add the entries that are `in the same place' of these two results.  An example illustrates the addition rule:
\begin{equation*}
\left(\begin{array}{@{}c@{}c@{}c@{}c@{}c@{}c@{}c@{}c@{}c@{}c@{}c@{}c@{}c@{}} 0&&0&&0&&0&&0&&1&&1 \\ & 1 && 1 && 1 &&1&&1&&2 & \end{array} \right) +
 \left(\begin{array}{@{}c@{}c@{}c@{}c@{}c@{}c@{}c@{}c@{}c@{}c@{}c@{}} 0&&0&&0&&1&&1&&1 \\ & 1 && 1 &&1&&1&&1 & \end{array} \right)=
\left(\begin{array}{@{}c@{}c@{}c@{}c@{}c@{}c@{}c@{}c@{}c@{}c@{}c@{}c@{}c@{}} 0&&0&&0&&0&&1&&2&&2 \\ & 1 && 2 && 2 &&2&&2&&3 & \end{array} \right).
\end{equation*}

\subsection{Representative element of rigid semisimple operators with the same symbol}\label{rep-sym}
In this section, we propose an algorithm to construct a representative element  of the rigid semisimple operators ($\lambda^{'}$, $\lambda^{''}$) with the same symbol invariant. This representative element have a nice structure and is unique.  Especially, its  fingerprint  can be obtained directly in Section \ref{cf}.
%Then we prove these  operators  can be mapped to the representative element by the symbol preserving maps.

 According to Table \ref{tsy}, the contribution to symbol of each row of partition have the following form
\begin{equation}\label{top}
  \Bigg(\!\!\!\ba{c}0 \;\; 0\cdots \overbrace{ 1\;\; 1\cdots1}^{L} \\
\;\;\;0\cdots 0\;\; 0\cdots 0 \ \ea \Bigg)
\end{equation}
 or
  \begin{equation}\label{bottom}
         \Bigg(\!\!\!\ba{c}0 \;\; 0\cdots 0\;\; 0 \cdots 0 \\
\;\;\;0\cdots \underbrace{1\; \;1\cdots 1}_{L} \ \ea \Bigg)
  \end{equation}

The contribution (\ref{top}) and contribution (\ref{bottom}) can be represented by a line above a dashed line and a line under it, respectively.
The length of the line corresponds to the number of  '1' in the formulas (\ref{top}) and (\ref{bottom}).  By using this method, the symbol invariant of a partition can be  visualized as shown in Fig.(\ref{f0}). The contributions of rows of different partitions are represented by different colour. The red lines and the black lines   represent the contributions of the rows of the partition $\lambda^{'}$ and $\lambda^{''}$, respectively.
 \begin{figure}[!ht]
  \begin{center}
    \includegraphics[width=4in]{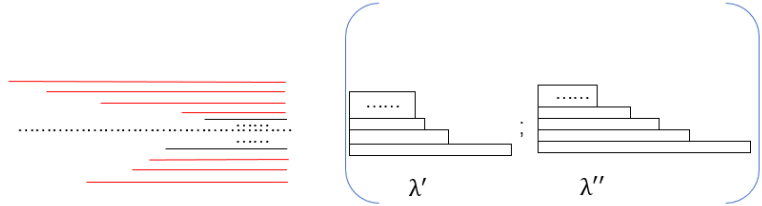}
  \end{center}
  \caption{ Visualization of symbol of the rigid semisimple operator $(\lambda^{'},\lambda^{''})$. }
  \label{f0}
\end{figure}

The representative element is constructed as follows.
%And we prove that all the semisimple operators with the same symbol invariant can be connected to the canonical operator by symbol preserving maps.
First, the longest line of the symbol  must contributed by  the first row of one of the two factors of the rigid semisimple operators.
Without loss of generality,  we assume it is contributed by the first  row of  $\lambda^{''}$. Since the first row of $\lambda^{''}$ is even, its contributions to symbol is on the top row of symbol according to Table (\ref{tsy}). So the longest line $a$ is above  the dotted line as shown in Fig.(\ref{f1}). From the remaining lines, we  chose  the longest lines above  and  under the  dotted line to combine the first  pairwise rows of $\lambda^{''}$.  Without loss of generality, we assume the  former one is longer than  the latter. So these two lines $b$, $c$ of symbol are contributed by  odd pairwise rows of $\lambda^{''}$ as shown in Fig.(\ref{f1}).
 \begin{figure}[!ht]
  \begin{center}
    \includegraphics[width=4in]{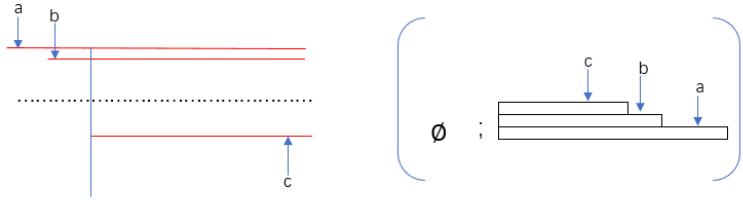}
  \end{center}
  \caption{ The visualization of the  first three rows of $\lambda^{''}$. }
  \label{f1}
\end{figure}

 \begin{figure}[!ht]
  \begin{center}
    \includegraphics[width=4in]{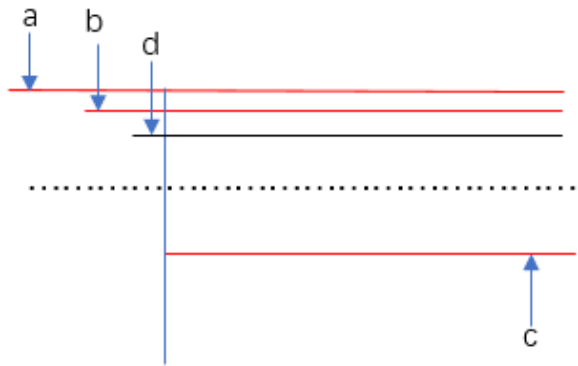}
  \end{center}
  \caption{ The  line $d$ would not be existent since the first line contributed by the first row of $\lambda^{'}$ is under the dashed line. }
  \label{fe}
\end{figure}
  There may be a line $d$ longer than $b$  and shorter than $c$    as shown in Fig.(\ref{fe}).
The line $d$ must be contributed by a row of the  $B_n$ partition $\lambda^{'}$. Since the first row of $\lambda^{'}$ is odd, its contribution to symbol is on the bottom row of symbol, a contradiction. So the contributions of  other rows of $\lambda^{''}$  are shorter than these three lines $a$, $b$, and $c$.
Repeating  the above procedure, we determine another two lines $d$ and $e$ of symbol contributed by pairwise row of $\lambda^{''}$  as shown in   Fig.(\ref{f2}).
\begin{figure}[!ht]
  \begin{center}
    \includegraphics[width=4in]{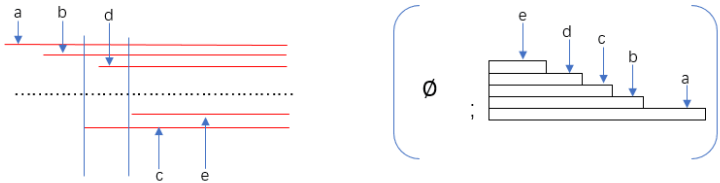}
  \end{center}
  \caption{  The visualization of the  first five rows of $\lambda^{''}$. }
  \label{f2}
\end{figure}

For the next pair lines of symbol, the line $f$  is longer than  $h$,  which means they are contributed by  two even rows of $\lambda^{''}$.
If a line $g$ satisfy $h<g<f$, it must  be contributed by the first row of the partition $\lambda^{'}$ as shown in Fig.(\ref{f3}).

 \begin{figure}[!ht]
  \begin{center}
    \includegraphics[width=4in]{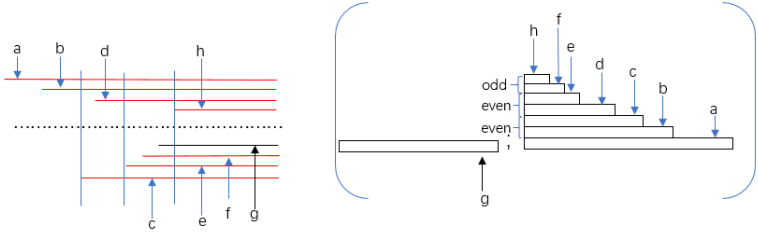}
  \end{center}
  \caption{ Visualization of contributions of symbol. }
  \label{f3}
\end{figure}

These procedure continue until we determine  rows of  $\lambda^{'}$ or  $\lambda^{''}$ for all  lines of the symbol. The following lemma make sure we would reach a consistent result.
\begin{lem}
There would not be  two successive lines  in the same part of the symbol,  excepting the first two rows of a partition.
\end{lem}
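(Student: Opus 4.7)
The plan is to recast Table~\ref{tsy} as a single parity criterion and then invoke the rigid pairing structure in Propositions~\ref{Pb}--\ref{Pd}. In the uniform formulation, the $i$-th row of a type-$X$ partition (with $t=-1,0,1$ for $X=B_n,C_n,D_n$) contributes its line above the dashed line iff $n_i+i+t+1$ is odd, and below otherwise. A short computation then gives the key identity: two consecutive rows $i$ and $i+1$ of a single partition produce lines on the same side of the symbol iff $n_i$ and $n_{i+1}$ have opposite parities, which under the no-gaps condition $\lambda_i-\lambda_{i+1}\in\{0,1\}$ is equivalent to $\lambda_i-\lambda_{i+1}=1$.

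Within any rigid partition, Propositions~\ref{Pb}--\ref{Pd} group the rows as row~$1$ alone followed by pairs $(2,3),(4,5),\dots$ of common parity. Equal parity together with the no-gaps condition forces the two rows of each pair to share the same value, so inside a pair the two lines always land on opposite sides of the symbol. Consequently, a same-side pair of successive rows of one partition can only occur (i) at the transition from row~$1$ to row~$2$, when $\lambda_1-\lambda_2=1$, or (ii) at an inter-pair junction, when $\lambda_{2k+1}-\lambda_{2k+2}=1$ for some $k\ge 1$. Case (i) is precisely the exception stated in the lemma.

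For case (ii), I would show that in the combined symbol of $(\lambda^{'},\lambda^{''})$ some line of the other partition has length strictly between the two would-be same-side lines, so the two lines are not successive in the combined length ordering. Because $\lambda^{'}$ and $\lambda^{''}$ place their row-$1$ contributions on opposite sides of the dashed line, the longest unassigned line of the other partition naturally falls into every such inter-pair gap. This is the mechanism explicitly exhibited in the algorithmic construction preceding the lemma: in Fig.~\ref{f3}, the line $g$ (coming from row~$1$ of $\lambda^{'}$) separates the two same-side lines $f,h$ of $\lambda^{''}$ that arise at an inter-pair junction. Iterating the algorithm handles every subsequent junction by the next unassigned line of the opposite partition.

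The main obstacle is verifying this interleaving rigorously. Concretely, one must compute $L_i-L_{i+1}$ at an inter-pair junction from the length formulas of Table~\ref{tsy} and compare it with the length of the next available line of the other partition, checking in each case that the latter lies strictly inside the open interval between $L_{i+1}$ and $L_i$. A short case analysis indexed by the parities of $n_i$ and $n_{i+1}$ and the type parameter $t$, together with the pairing constraint applied to the other partition, should reduce the verification to a handful of arithmetic inequalities and thereby close the proof.
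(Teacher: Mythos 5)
Your first two steps are sound and are a nice sharpening of Table~\ref{tsy}: the parity criterion (a row sits above the dashed line iff $n_i+i+t+1$ is odd), the consequence that consecutive rows of one partition land on the same side iff $\lambda_i-\lambda_{i+1}=1$, and the observation that the equal-parity pairs of Propositions~\ref{Pb}--\ref{Pd} are forced to be equal rows and hence always split across the two sides. But the argument then stops exactly where the lemma's content begins. Your case (ii) — an inter-pair junction with $\lambda_{2k+1}-\lambda_{2k+2}=1$ — genuinely produces two same-side lines with no line of that partition between them, and you dispose of it by asserting that a line of the \emph{other} partition always has length strictly inside the gap. That assertion is the whole lemma, and you flag it yourself as "the main obstacle" without closing it. It is not a routine arithmetic check: nothing guarantees the other partition even has an unassigned line left at that stage (it may be empty, or much shorter), and when it does, its available line lengths are partial sums of \emph{its own} rows, which are not a priori synchronized with the junction gap of the first partition. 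A proof would need a real structural argument here, not "a handful of inequalities."

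The paper's proof goes a different and more economical way, and it is worth contrasting. It does not try to exhibit an interleaving line; it assumes two successive same-part lines $b$ and $d$ exist, locates their pairwise partners $c$ and $e$ (which by the side-splitting property must lie in the other part, hence below $d$), and then reconstructs the partition from the line lengths: since line length decreases with row index, $L_c\le L_d$ forces row $c$ to come after row $d$, while the pairwise adjacency of $b$ and $c$ forces row $c$ to come immediately after row $b$ and hence before row $d$ — equivalently, row $d$ would have to be longer than the earlier row $c$, contradicting monotonicity of the partition (Fig.~(\ref{fn})). This contradiction mechanism also covers a configuration your reduction misses entirely: two adjacent same-side lines in the \emph{combined} symbol need not come from consecutive rows of a single partition (they can come from two different partitions, or from rows of one partition whose intermediate lines fall elsewhere in the length ordering), so restricting attention to "successive rows of one partition" does not exhaust the cases the lemma must exclude. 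To salvage your route you would need both to prove the interleaving claim and to add an argument for cross-partition adjacencies; as written, the proposal has a genuine gap at its decisive step.
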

\begin{proof} If there are two successive lines $b$ and  $d$ in the same row of the symbol, then there must be another  two lines $c$ and $e$  in the other  part   as shown in Fig.(\ref{fn})($a$). The lines $b$ and  $c$ are  pairwise rows. And the lines $d$ and $e$ are  pairwise rows.  The partition corresponding to Fig.(\ref{fn})($a$) are shown in Fig.(\ref{fn})($b$). The length of the row $d$ is longer than the length of the row $c$,   which is a contradiction.
\end{proof}

 \begin{figure}[!ht]
  \begin{center}
    \includegraphics[width=4in]{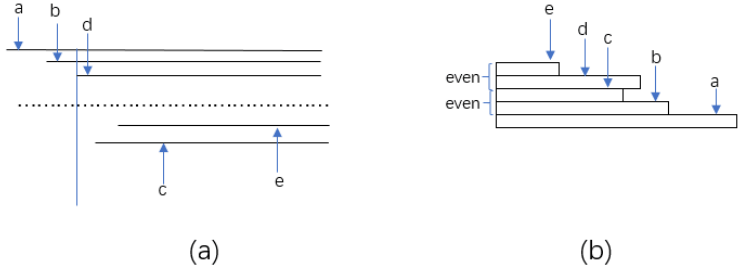}
  \end{center}
  \caption{ $(a)$ The lines $b$ and $d$ are two successive lines in the same row  of symbol. $(b)$, The  partition correspond to the symbol. }
  \label{fn}
\end{figure}

 \begin{figure}[!ht]
  \begin{center}
    \includegraphics[width=4in]{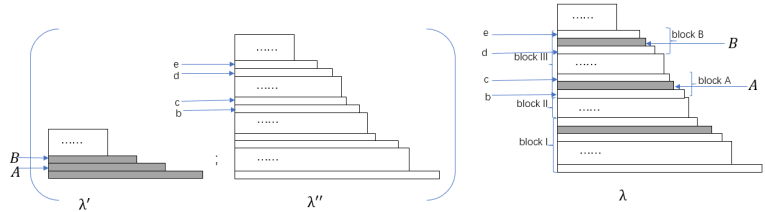}
  \end{center}
  \caption{ The representative element $(\lambda^{'}, \lambda^{''})_R$ and $\lambda$. The gray rows are rows of the partition $\lambda^{'}$.  }
  \label{canobb}
\end{figure}
According to the above algorithm,  we get a representative element $(\lambda^{'}, \lambda^{''})$ and   $\lambda=\lambda^{'}+ \lambda^{''}$ as shown in Fig.(\ref{canobb}). For the  block $III$, both the rows $A$ and $B$ are two rows of  pairwise rows of  the partition $\lambda^{'}$.     Note that the row $A$ and row $B$ are  between the two rows of  pairwise rows of $\lambda^{''}$, which is   a common feature of the rows of the partitions $\lambda^{'}$ in $\lambda$.

According to Table.(\ref{tsy}),  the two lines $b$ and $c$ in Fig.(\ref{canon})($a$) are contributed by  even pairwise rows of $\lambda^{''}$. And  the two lines $d$ and $e$  are contributed by  odd pairwise rows of $\lambda^{''}$.   According to the algorithm to construct the representative element, the rows $A$ and $B$ are two rows of  pairwise rows of the partition $\lambda^{'}$ and are  restricted  by the rows $b$, $c$ and the rows $d$, $e$, respectively. The length of the row $A$ is shorter than the length of the row $b$ and longer than that of the row $c$. And the length of the row $B$ is shorter than the length of the row $d$ and longer than that of the row $e$. The line $A$ is contributed by the first row of   even pairwise rows of $\lambda^{'}$, and  the line $B$ is contributed by the second row of the even pairwise rows. Another example is shown in Fig.(\ref{canon})($b$). The row $A$ is contributed by the first row of  odd pairwise rows,  which  have the same parity with the rows $b$ and $c$.  The row $B$ is contributed by the second row of the odd pairwise rows,  which  have the opposite parity with the rows $d$ and $e$.
\begin{figure}[!ht]
  \begin{center}
    \includegraphics[width=4in]{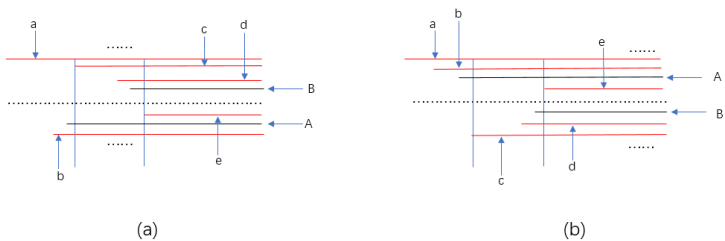}
  \end{center}
  \caption{ The rows $A$ and $B$ are   pairwise rows of $\lambda^{'}$. }
  \label{canon}
\end{figure}

Summary,   for   pairwise rows of the partition $\lambda^{'}$ in $\lambda$,  the parity of the first  row  is the same with the pairwise rows of $\lambda^{''}$ in the block $A$, while the parity of  the second row would be opposite with the pairwise rows of $\lambda^{''}$ in the block $B$ as shown in Fig.(\ref{canon}), which  corresponds to the  block $III$ in Fig.(\ref{canobb}).

\section{Fingerprint invariant of partitions}\label{finger}
In the first two subsections, we would like to introduce the construction of the fingerprint in \cite{SW17-2}. Then we try to find a representative element of the rigid semisimple operators with the same fingerprint and calculate  symbol of it. Finally, we give the fingerprint of the representative element $(\lambda',\lambda'')_R$ in previous section. In this way, we find a map between the symbol invariant and the fingerprint invariant.
\subsection{Fingerprint invariant}
For rigid semisimple surface operator $(\lambda',\lambda'')$, we introduce the definition of fingerprint as follows.  First, add the two partitions  $\lambda=\lambda^{'}+\lambda^{"}$, and then calculate   the partition $\mu=Sp(\lambda)$
\begin{eqnarray}\label{mu}
% \nonumber to remove numbering (before each equation)
\mu_i=Sp(\lambda)_i=
\left\{ \begin{aligned}
         & \lambda_i + p_{\lambda}(i) \quad  \quad   \textrm{if} \quad \lambda_i  \textrm{ is odd  and} \quad \lambda_i \neq \lambda_{i-p_{\lambda}(i)}, \\
         & \lambda_i   \quad  \quad   \quad   \quad  \quad        \textrm{ otherwise}
       \end{aligned} \right.
\end{eqnarray}
Next define the function $\tau$ from an even positive integer $m$ to $\pm$ as follows. For a partition in the $B_n$ and $D_n$ theories,  $\tau(m)$ is $-1$ if  at least one $\mu_i$ such that $\mu_i= m$ and either of the following three conditions is satisfied.
\begin{eqnarray}\label{con}
% \nonumber to remove numbering (before each equation)
  &&(i)\quad\quad\quad\quad\quad \,\,\mu_i\neq \lambda_i  \non \\
 && (ii) \quad\quad\quad\sum^{i}_{k=1}\mu_k \neq \sum^{i}_{k=1}\lambda_k\\
&& (iii)_{SO} \quad\quad\quad \lambda^{'}_{i} \quad\textrm{is odd}.  \non
\end{eqnarray}
Otherwise  $\tau$ is 1.
For the  partitions in the $C_n$ theory,  the definition is exactly  the same except the condition $(iii)_{SO}$ is  replaced by
$$(iii)_{Sp}\quad\quad \lambda^{'}_{i}  \quad   \textrm{is even}.$$
Finally  construct  a pair of partitions $[\alpha;\beta]$. For each pair of parts of $\mu$ both equal to $a$, satisfying $\tau(a)=1$,   retain one part $a$ as a part of  the partition $\alpha$. For each part of $\mu$ of size $2b$, satisfying $\tau(2b)=-1$,  retain $b$ as a part of  the partition $\beta$.

Note that $\mu_i\neq\lambda_i$ only happen at the end of a row.
According to the above definition of fingerprint, we have the following important lemma.
\begin{lem}\label{ff}
Under the map $\mu$, the change of  rows depend on  the parity of the row and  the sign of $p_\lambda(i)$.
\begin{center}
  \begin{tabular}{|c|c|l|}
  \hline
  % after \\: \hline or \cline{col1-col2} \cline{col3-col4} ...
  Parity of row & Sign & Change \\
  odd & $-$ & $\mu_i=\lambda_i-1$ \\
  even & $-$ & $\mu_i=\lambda_i+1$ \\
  even & $+$ & $\mu_i=\lambda_i$ \\
  odd & $+$ & $\mu_i=\lambda_i$ \\
  \hline
\end{tabular}
\end{center}
\end{lem}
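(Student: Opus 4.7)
The plan is to verify the lemma by a direct case analysis from the definition of $\mu=Sp(\lambda)$ in equation (\ref{mu}), using the structural constraints on rigid partitions stated in Propositions \ref{Pb}, \ref{Pd}, and \ref{Pc}. Since each entry of the table is characterized by two binary data (parity of $\lambda_i$ and the sign of $p_\lambda(i)$), there are only four cases to treat, and each is a matter of determining which branch of the piecewise definition (\ref{mu}) applies.

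I would first handle the two rows of the table with sign $+$. If $\lambda_i$ is even, the first branch of (\ref{mu}) is inactive by construction, so $\mu_i=\lambda_i$ is immediate. If $\lambda_i$ is odd and $p_\lambda(i)=+1$, then $i-p_\lambda(i)=i-1$, and by the pairwise parity pattern for rigid partitions the row $\lambda_{i-1}$ has the same parity as $\lambda_i$ and indeed shares its length in the relevant paired position (odd rows come in equal-length pairs in the $B_n$, $C_n$, and $D_n$ cases). Hence $\lambda_i=\lambda_{i-p_\lambda(i)}$, the condition in the first branch fails, and we again land in the ``otherwise'' case with $\mu_i=\lambda_i$.

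Next I would address the two rows with sign $-$. When $\lambda_i$ is odd and $p_\lambda(i)=-1$, we have $i-p_\lambda(i)=i+1$; the rigidity constraint forbids an odd part from appearing more than twice and rules out isolated transitions that would force $\lambda_{i+1}=\lambda_i$ in this configuration. Thus $\lambda_i\neq\lambda_{i+1}$, the first branch of (\ref{mu}) applies, and $\mu_i=\lambda_i+p_\lambda(i)=\lambda_i-1$. The delicate entry is the ``even, $-$'' case, which cannot come from (\ref{mu}) applied to row $i$ itself (since the formula leaves even rows fixed). The resolution is to read this entry in terms of the effective bookkeeping induced on an even row that sits adjacent to an odd row being shifted by $-1$: by the pairing structure in Propositions \ref{Pb}--\ref{Pc}, such an even row is forced to carry a compensating $+1$ in the description of $\mu$, which is the content recorded in the table.

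The main obstacle is this final entry. It is not a straight application of (\ref{mu}) to a single row but rather requires one to fix a consistent convention for $p_\lambda$ on even rows and to argue, using the rigid pairing, that the $-1$ shift of an odd neighbor translates into the advertised $+1$ on the even row. Once this is pinned down, the remaining three cases are essentially read off the definition, and the table follows uniformly.
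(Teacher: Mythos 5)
The paper states this lemma without proof, offering only the remark that it follows ``according to the above definition of fingerprint,'' so there is no argument of record to compare yours against; the only question is whether your case analysis actually closes. It does not, and the decisive problem is the one you yourself flag. The first branch of (\ref{mu}) is gated on ``$\lambda_i$ is odd,'' so an even part can never move under the map as written, and the table entry $\mu_i=\lambda_i+1$ for an even row with sign $-$ is simply not derivable from (\ref{mu}). Your proposed resolution --- an ``effective bookkeeping'' in which an even row absorbs a compensating $+1$ from a shifted odd neighbour --- is a reinterpretation, not a derivation: you supply no convention for $p_\lambda$ on even rows, no precise statement of what is being proved in that case, and no argument. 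Either ``parity of row'' does not mean the parity of $\lambda_i$, or the definition recorded in (\ref{mu}) is incomplete; until one of these is pinned down (and note the paper never defines $p_\lambda(i)$ at all), the fourth entry is unproved. A warning sign you should have heeded: the block operators later in Section \ref{finger} append a box to the \emph{first} row of a pair of equal \emph{odd} rows, i.e.\ they realize $\mu_i=\lambda_i+1$ on an odd part, which is incompatible with your reading of the table and shows that the intended meaning of ``parity of row'' must be settled before any case analysis can begin.

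The remaining cases also lean on facts you have not established. In the (odd, $+$) case you assert $\lambda_{i-1}=\lambda_i$ from the pairwise pattern, but Propositions \ref{Pb}--\ref{Pc} pair rows $(2,3),(4,5),\dots$, so row $i$ may be the first row of its pair with $\lambda_{i-1}>\lambda_i$; whether the condition $\lambda_i\neq\lambda_{i-p_\lambda(i)}$ fails therefore depends on how the sign of $p_\lambda$ correlates with the position of $i$ inside its pair, which again requires the absent definition of $p_\lambda$. In the (odd, $-$) case you claim rigidity ``forbids an odd part from appearing more than twice'': it does not --- the condition forbids an odd part from appearing \emph{exactly} twice in a rigid $B_n/D_n$ partition, while three or more repetitions are allowed, and in any event the lemma concerns $\lambda=\lambda'+\lambda''$, which is a sum of rigid partitions and need not inherit the multiplicity constraints. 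So $\lambda_{i+1}=\lambda_i$ is not excluded and the first branch of (\ref{mu}) need not fire. In sum, only the (even, $+$) entry is actually proved; the other three require the precise definitions of $p_\lambda(i)$ and of ``parity of row,'' neither of which your argument (or the paper) supplies.
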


\begin{figure}[!ht]
  \begin{center}
    \includegraphics[width=2.5in]{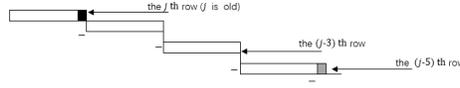}
  \end{center}
  \caption{ The difference of the height of the $(j-1)$th row and the $(j-3)$th row is two, violating the rigid condition $\lambda_i-\lambda_{i+1}\leq 1$.}
  \label{con2}
\end{figure}
It is easy to prove the following fact.
\begin{prop}{\cite{SW17-2}}
For the partition $\lambda$ with $\lambda_i-\lambda_{i+1}\leq 1$, the condition (\ref{con}) $(i)$  imply $(ii)$.
\end{prop}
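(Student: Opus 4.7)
The plan is to argue by contradiction. Suppose that $\mu_i\neq\lambda_i$ but $\sum_{k=1}^{i}\mu_k=\sum_{k=1}^{i}\lambda_k$. By Lemma~\ref{ff} the differences $\mu_k-\lambda_k$ take values only in $\{-1,0,+1\}$, so the cancellation means that the $+1$ and $-1$ contributions among the indices $k\leq i$ must appear in equal numbers, with $\mu_i-\lambda_i=\pm1$ necessarily participating. The task is to show this is incompatible with the rigidity hypothesis $\lambda_k-\lambda_{k+1}\leq 1$.

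First I would localize the places where non-zero changes can occur. The remark preceding Lemma~\ref{ff} says that $\mu_k\neq\lambda_k$ happens only at ``the end of a row'', i.e.~at indices $k$ with $\lambda_k>\lambda_{k+1}$. Combined with rigidity, every such drop equals exactly $1$, so $\lambda$ decomposes into consecutive blocks of equal values whose common values decrease by $1$ from one block to the next. In particular the parities of these values alternate, and the row-end positions $r_1<r_2<\cdots$ (the only candidates for non-zero $\mu_k-\lambda_k$) have $\lambda_{r_j}$ of alternating parity.

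Next I would read off the sign pattern from Lemma~\ref{ff}. A change $\mu_k-\lambda_k=+1$ occurs exactly at an even-value row-end carrying a ``$-$'' sign of $p_\lambda$, and $\mu_k-\lambda_k=-1$ occurs exactly at an odd-value row-end carrying ``$-$''. Because the row-end parities alternate, the non-zero contributions in the sequence $\mu_k-\lambda_k$ alternate between $+1$ and $-1$ (interspersed with zeros coming from ``$+$''-signed row-ends). Therefore, for the partial sum to vanish at position $i$, the contribution $\mu_i-\lambda_i$ must be cancelled by the previous non-zero term at the immediately preceding nonzero index, say $r_a<r_b=i$.

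Finally, I would chase this forced pair $r_a,r_b$ through the pairing defining $p_\lambda$ to produce the configuration depicted in Fig.~\ref{con2}. The alternation of parities and signs pins down the relative positions of $r_a$ and $r_b$ tightly enough that the heights of the rows $r_a$ and $r_b$ are forced to differ by $2$ across rows that rigidity requires to differ by at most $1$, yielding $\lambda_{r_a}-\lambda_{r_a+1}\geq 2$ or an analogous violation. The main obstacle is this final step: making the pairing $p_\lambda$ explicit enough to identify the offending two rows, because the cancellation in the partial sum must be matched with the combinatorial pairing that assigns the signs. Once that identification is made, the contradiction with $\lambda_k-\lambda_{k+1}\leq 1$ is immediate from the geometry of Fig.~\ref{con2}.
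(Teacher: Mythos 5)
The paper itself offers no proof of this proposition: it is imported from \cite{SW17-2} with the remark that it is ``easy to prove,'' so there is nothing here to compare your argument against and your sketch must stand on its own. As written it does not. The first concrete gap is the claim that ``the non-zero contributions in the sequence $\mu_k-\lambda_k$ alternate between $+1$ and $-1$.'' Alternation of the \emph{parities} of the row-end values (which does follow from $\lambda_k-\lambda_{k+1}\le 1$) does not give alternation of the \emph{signs} of the non-zero differences: by Lemma \ref{ff} a row end contributes $0$ whenever its $p_\lambda$-sign is $+$, so the non-vanishing terms form a subsequence of the row ends selected by the sign pattern of $p_\lambda$, and a subsequence of an alternating sequence need not alternate. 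Nothing in your argument controls which row ends carry the sign $-$.

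The second and more serious problem is that the contradiction you are hunting for does not exist at the level of sign bookkeeping alone. The map (\ref{mu}) moves boxes in matched pairs: a $-1$ at some row $j$ is compensated by a $+1$ at a later row $j'$, so at the receiving row $i=j'$ one genuinely has $\mu_i\neq\lambda_i$ while $\sum_{k\le i}\mu_k=\sum_{k\le i}\lambda_k$. Concretely, $\lambda=(3,2,2,1)$ satisfies $\lambda_k-\lambda_{k+1}\le 1$, and formula (\ref{mu}) with the standard choice $p_\lambda(i)=(-1)^{\#\{j\le i\,:\,\lambda_j\ \mathrm{odd}\}}$ gives $\mu=(2,2,2,2)$; at $i=4$ condition $(i)$ holds but condition $(ii)$ fails. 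So the pointwise implication you set out to prove cannot be obtained by your route: the proposition has to be read through the test in the definition of $\tau$ (for each even value $m$, if \emph{some} part of $\mu$ equal to $m$ satisfies $(i)$ then \emph{some} part equal to $m$ satisfies $(ii)$ --- in the example $i=1$ does the job for $m=2$), or else one must use structural input about $\lambda=\lambda'+\lambda''$ being a sum of rigid partitions, which your argument never invokes. You yourself flag the final step --- making $p_\lambda$ explicit and locating the two offending rows --- as the ``main obstacle''; that step is exactly where the content of the proposition lies, and it is missing.
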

We would give an example where  the condition $(ii)$ works. As shown in Fig.(\ref{con2}),  the heights of the $(j-1)$th row, the $(j-3)$th row, and the $(j-5)$th are even.  And  the difference of the height of the $(j-1)$th row and the $(j-3)$th row violate the rigid condition $\lambda_i-\lambda_{i+1}\leq 1$ as well as that of the height of the $(j-3)$th row and the $(j-5)$th row. The part $(j-3)$ satisfy the condition (\ref{con})$(ii)$ but do not satisfy the condition $(i)$.

In this study,   the neglect of the condition $(ii)$ in the definition of fingerprint  would not change the conclusions in this paper. In the rest of the paper, we would focus on  the partition $\lambda=\lambda^{'}+\lambda^{''}$ with $\lambda_i-\lambda_{i+1}\leq 1$.

\subsection{Construction of the fingerprint invariant }
In this subsection, we introduce the construction of fingerprint of rigid $(\lambda^{'}, \lambda^{''})$ in the $B_n$ theory.  We use a simplified model which is enough for the construction of the fingerprint of the representative element  $(\lambda^{'}, \lambda^{''})_R$ in next subsection. For the $C_n$ and $D_n$ theories, we can get the same results with minor modifications.
\begin{flushleft}
{ \textbf{Operators $\mu_{e11}$, $\mu_{e12}$, $\mu_{e21}$, and $\mu_{e22}$}}
\end{flushleft}
The partition $\lambda=\lambda^{'}+\lambda^{''}$ is constructed  by inserting rows of $\lambda^{'}$  into $\lambda^{''}$  one by one. Without confusion, the image of the map $\mu$  is also denoted as $\mu$.
 The partition $\lambda=\lambda^{'}+\lambda^{''}$  is decomposed into several  blocks such as blocks $I, II, III$ as shown in  Fig.(\ref{model}).
\begin{figure}[!ht]
  \begin{center}
    \includegraphics[width=4.9in]{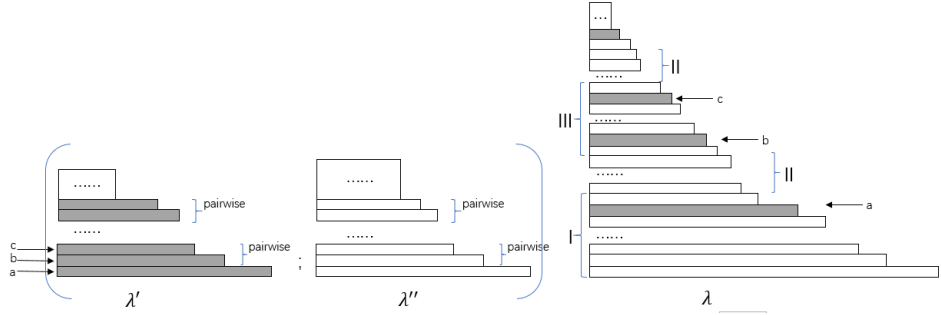}
  \end{center}
  \caption{ Blocks in the partition $\lambda$.}
  \label{model}
\end{figure}

The  $III$ type block   consist of   pairwise rows of $\lambda^{'}$ and  $\lambda^{''}$ as shown in Fig.(\ref{model}). The upper boundary  and the lower boundary of the block consist of a row of $\lambda^{'}$ and one pairwise rows of  $\lambda^{''}$. When the pairwise rows $b$, $c$ of $\lambda^{'}$  are even, we define four operators $\mu_{e11}$, $\mu_{e12}$, $\mu_{e21}$, and $\mu_{e22}$  according to the positions of the  rows of $\lambda^{'}$ inserted into  $\lambda^{''}$, as shown in Fig.(\ref{ob}). The first and last row of the block do not change under the map $\mu$ as well as the  even pairwise rows of $\lambda^{''}$. We append a box to the first row of odd pairwise rows of  $\lambda^{''}$ and delete a box at the end of the second row. If $b$ is not the first row of the block, it will behave as the first row of  pairwise rows of  $\lambda^{''}$ as shown in Fig.(\ref{model}). If $c$ is not  the last row of the block, it will behave as the second row of  pairwise rows of  $\lambda^{''}$.  When the pairwise rows of $\lambda^{'}$ inserted into $\lambda^{''}$  are odd, we define four operators $\mu_{o11}$, $\mu_{o12}$, $\mu_{o21}$, and $\mu_{o22}$ similarly. However the odd rows and the even rows interchange roles in this case. We  can also consider $III$ type block    consisting of one  pairwise rows $\lambda^{''}$ and pairwise rows of  $\lambda^{'}$.
\begin{figure}[!ht]
  \begin{center}
    \includegraphics[width=4.9in]{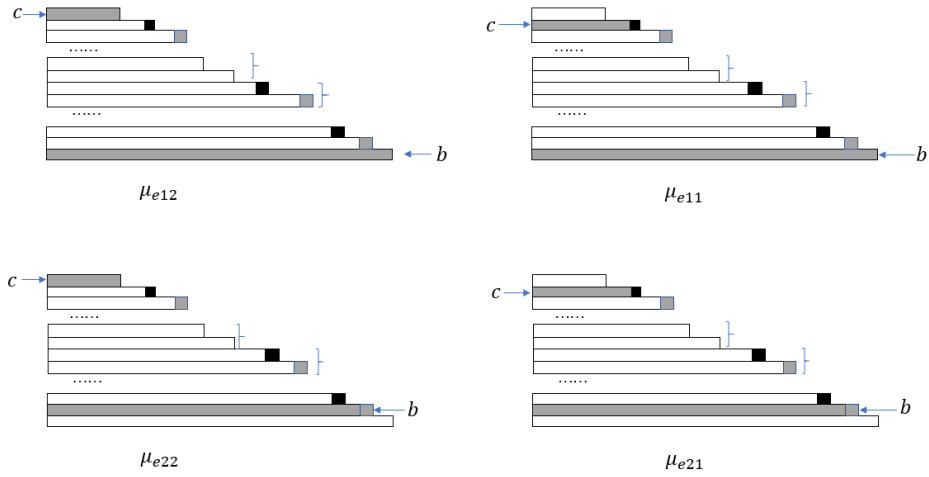}
  \end{center}
  \caption{The  images of $III$ type blocks under the map $\mu$.}
  \label{ob}
\end{figure}

The  $II$ type block  only consist of  pairwise rows of $\lambda^{'}$ or  $\lambda^{''}$ as shown in Fig.(\ref{mue}) which do not change under the map $\mu$.
\begin{figure}[!ht]
  \begin{center}
    \includegraphics[width=4.9in]{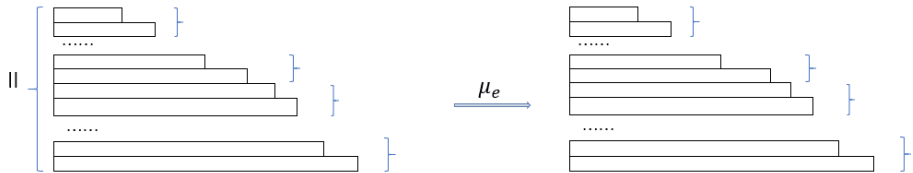}
  \end{center}
  \caption{ $II$ type block and its image under the map $\mu$.}
  \label{mue}
\end{figure}

The  $I$ type block   consist of  pairwise rows of  $\lambda^{''}$, besides of the  first row  $a$ of $\lambda^{'}$ and the  first row of $\lambda^{''}$. We define two operators $\mu_{e1}$and $\mu_{e2}$ correspond to row $a$  above the pairwise rows in Fig.(\ref{mu1})(a)  and  between the  pairwise rows in Fig.(\ref{mu1})(b), respectively. The last row of the block and the  even pairwise rows of $\lambda^{''}$ do not change under the map $\mu$. We append a box to the first row of odd pairwise rows of  $\lambda^{''}$ and delete a box at the end of the second row of odd pairwise rows. If $a$ is not the last row of the block, it will behave as the second row of  pairwise rows of  $\lambda^{''}$ as shown in Fig.(\ref{mu1})(b).    We  can also consider $I$ type block   consist of  pairwise rows of  $\lambda^{'}$, besides of the  first rows of $\lambda^{'}$ and  $\lambda^{''}$. Similarly, we  define two operators $\mu_{o1}$ and $\mu_{o2}$. However the odd rows and the even rows interchange roles in this case.
There is no $I$ type block of the $\lambda$ in the $C_n$ theory. So the fingerprint of the $C_n$ rigid semisimple operators  is a  simplified  version of that in the $B_n$ and $D_n$ theories.
\begin{figure}[!ht]
  \begin{center}
    \includegraphics[width=4.9in]{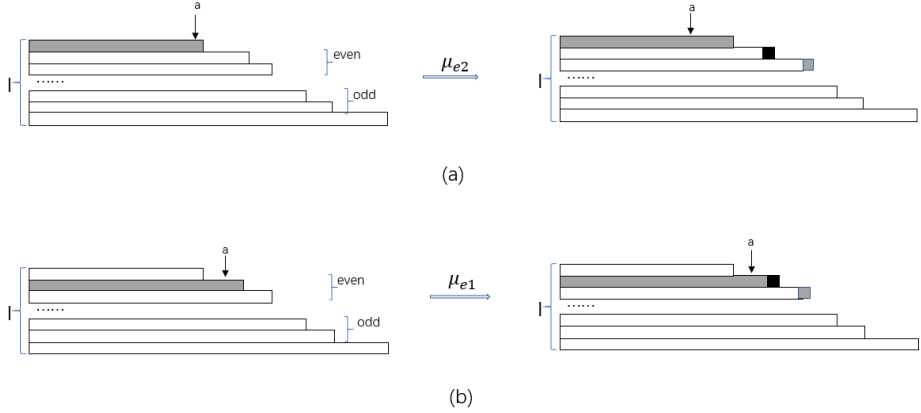}
  \end{center}
  \caption{ $I$ type block and its image under the map $\mu$.}
  \label{mu1}
\end{figure}

\subsection{Representative element of rigid semisimple operators with the same fingerprint}\label{rep-finger}
With the fingerprint given , it is hope to  find a natural representative  element $(\lambda^{'}, \lambda^{''})_r$   which  similar to the representative  element $(\lambda^{'}, \lambda^{''})_R$.
Since pair of  fingerprint and $\lambda$  would determine the  rigid semisimple operator  $(\lambda^{'}, \lambda^{''})$, we would find a  representative  element $\lambda_r$  of $\lambda$.
%Given a fingerprint, we make a classification of $\lambda$ map to  $\mu_R$ under the map $\mu$. And we prove these $\lambda$ have the same symbol invariant

An   element $\lambda_r$ for  $\lambda^{'}+ \lambda^{''}$  can be constructed  from the fingerprint invariant  naturally, with the same fingerprint invariant $[\alpha, \beta]$.     For each part $a$  of the partition $\alpha$ retain $a^{2}$. From the integers so obtained form the partition $\lambda^{'}_r$. For each part $b$  of the partition $\beta$ retain $2b$. From the integers so obtained form the partition $\lambda^{''}_r$.
Then  $\mu_r=\lambda^{'}_r+\lambda^{''}_r$ is the image of $\lambda$ under the map $\mu$. All the partition   $\lambda$ for rigid semisimple operator $(\lambda^{'}, \lambda^{''})$ with the same fingerprint leads to  it by the map $\mu$.

A naive guess is that there always exist  a operator $(\lambda^{'}, \lambda^{''})_r$ and $\mu_r=\lambda^{'}+\lambda^{''}$.   Unfortunately, it is wrong.  We would give a counterexample as follows. The $B_6$ operator $(2^21^9,\emptyset)$ is the only operator with the dimension\footnote{This is another invariant of rigid semisimple operators which contain less information than symbol invariant and fingerprint invariant. } of 20. Its image under the map $\mu$ is
$$2^21^9\rightarrow 2^21^8$$
where $\lambda \neq \mu$, a contradiction.  However, for the rigid semisimple operator with  $\lambda=\mu$, their symbol have nice properties.
\begin{flushleft}
 \textbf{ Symbol of $\mu_r$:}
\end{flushleft}
The operator $\mu_e$ in Fig.(\ref{mue}) suggest that the possible structure of the partition $\mu_r$.  It consist of pairwise rows and like a partition in the $C_n$ theory. Since the rigid semisimple operator can be obtain by  pair of $[\alpha, \beta]$ and $\mu_r$, of which the  symbol  can be obtained  directly  according to Table \ref{tsy}.

\subsection{Fingerprint of the representative element $(\lambda^{'},\lambda^{''})_R$}\label{cf}

The representative  semisimple operator $(\lambda^{'}, \lambda^{''})_R$ and the corresponding partition $\lambda=\lambda^{'}+\lambda^{''}$ are  shown in Fig.(\ref{canobb}). There are three type of blocks in the representative element $\lambda_R$  which are the $I$, $II$, and $III$ type blocks.
\begin{itemize}
  \item For the $I$ type block as shown in Fig.(\ref{cab}), the $\mu$ partition is given by the operator $\mu_{o1}$ in Fig.(\ref{ob}) since $l3$ is odd. The height of the last row of the block is even which  satisfy the condition $(iii)$ but do not satisfy the condition $(i)$. The other rows of the block do not satisfy the condition $(iii)$ but the even rows do change under the map  $\mu_{o1}$ thus satisfying the condition $(i)$. And the odd rows do change under the map  $\mu_{e1}$ thus satisfying the condition $(i)$.
 \begin{figure}[!ht]
  \begin{center}
    \includegraphics[width=4in]{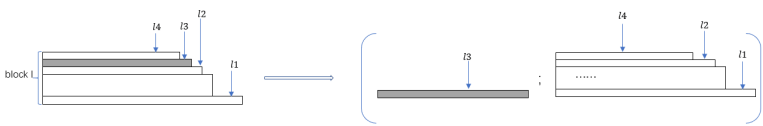}
  \end{center}
  \caption{ Block $I$ of the representative element . }
  \label{cab}
\end{figure}

  \item For the $II$ type block as shown in Fig.(\ref{canobb}), the $\mu$ partition is given by the operator $\mu_{e}$ in Fig.(\ref{ob}). The rows of the block do not change under this operator, thus  not satisfying the condition $(i)$. However the block satisfy the condition $(iii)$.

  \item For the $III$ type block as shown in Fig.(\ref{canobb}), the $\mu$ partition is given by the operator $\mu_{e21}$ or $\mu_{o21}$  in Fig.(\ref{ob}), according to the parities of the pairwise rows $A$, $B$.
The height of the last row of the block is even. It satisfy the condition $(iii)$ but do not satisfy the condition $(i)$. The other rows of the block do not satisfy the condition $(iii)$ but the even rows do change under the map  $\mu_{o21}$, thus satisfying  the condition $(i)$.  And the odd rows do change under the map  $\mu_{e21}$, thus  satisfying the condition $(i)$.
\end{itemize}

\section{$C_n$ and $D_n$ theories}\label{cd}
For the rigid semisimple operators in the $C_n$ and $D_n$ theories, we can get the similar results by the same strategy in previous sections with minor modifications.

\begin{flushleft}
 \textbf{ Fingerprint of $\lambda_R$:} Now we discuss the fingerprint invariant of the representative element $(\lambda^{'}, \lambda^{''})_R$ for the rigid semisimple operators in the $C_n$ and $D_n$ theories.
\end{flushleft}
The first two rows of $C_n$ partitions  are pairwise rows according to Proposition \ref{Pc}. Thus there is no $I$ type block of the $\lambda$ in the $C_n$ theory. So the fingerprint of the $C_n$ rigid semisimple operators  is a  simplified  version of that in the $B_n$ and $D_n$ theories.  The other processes to calculate the fingerprint of $\lambda_R$ is exact the same with that of the $B_n$ rigid semisimple operator.

The first  row of $D_n$ partitions  is even and not of  pairwise rows according to Proposition \ref{Pd}.  For the  $I$ type block of the $\lambda$ in the $D_n$ theory, there are only $\mu_{e1}$ and $\mu_{e2}$ operators.   The other processes to calculate the fingerprint of $\lambda_R$ is exact the same with that of the $B_n$ rigid semisimple operator.

\begin{flushleft}
 \textbf{ Symbol of $\mu_r$:}  we will discuss the symbol invariant of the element $\mu_r$.
\end{flushleft}
The operator $\mu_e$ in Fig.(\ref{mue}) suggest that the possible structure of the partition $\mu_r$.  It consist of pairwise rows and like a partition in the $C_n$ theory. The rigid semisimple operator can be obtain by  pair of $[\alpha, \beta]$ and $\mu_r$, of which  the  symbol  can be obtained  directly  according to Table \ref{tsy}.

\end{document}